\def\today{\ifcase\month\or
	January\or February\or March\or April\or May\or June\or
	July\or August\or September\or October\or November\or December\fi
	\space\number\day, \number\year}
\newtheorem{theorem}{Theorem}
\newtheorem{lemma}[theorem]{Lemma}
\theoremstyle{definition}
\theoremstyle{remark}
\newcommand{\mc}{\mathcal}
\newcommand{\A}{\mc{A}}
\newcommand{\hh}{\tfrac12}
\newcommand{\dt}{\text{\rm d}t}
\renewcommand{\d}{\text{\rm d}}
\newcommand{\re}{{\rm Re}\,}
\begin{document}
	\title[Bounds for zeta on the 1-line   \\ under partial Riemann hypothesis]{Bounding zeta on the 1-line   \\ under the partial Riemann hypothesis}
	\subjclass[2020]{11M06, 11M26, 11Y35.}
\author{Andr\'es Chirre}


\address{Departamento de Ciencias - Sección Matemáticas, Pontificia Universidad Católica del Perú, Lima, Perú}
\email{cchirre@pucp.edu.pe}

	\allowdisplaybreaks
	\numberwithin{equation}{section}
	
	\maketitle  
	
	\begin{abstract}
	We provide explicit bounds in the theory of the Riemann zeta-function at the line $\re{s}=1$, assuming that the Riemann hypothesis holds until the height $T$. In particular, we improve some bounds, in finite regions, for the logarithmic derivative and the reciprocal of the Riemann zeta-function.
	\end{abstract}
	

	\smallskip

\section{Introduction}
A classical problem in analytic number theory is to find explicit bounds for the Riemann zeta-function. In particular, bounds at the line $\re{s}=1$ are of great interest, due to their usefulness in estimations to the M\"obius function and the von Mangoldt function. The main purpose of this paper is to obtain new bounds for the Riemann zeta-function in ranges where currently it is challenging to get computational verification. 

\subsection{Background} Let $\zeta(s)$ be the Riemann zeta-function. Unconditionally, it is known
that, as $t\to\infty$,
\begin{align} \label{11_24pm} 
 \dfrac{1}{\zeta(1+it)}=O(\log t),\,\,\,\,\,\,\,\, \mbox{and} \,\,\,\,\,\,\,\,\,\,\dfrac{\zeta'(1+it)}{\zeta(1+it)}=O(\log t).
\end{align} 
Currently, the best explicit bounds for  $1/\zeta(1+it)$ and $\zeta'(1+it)/\zeta(1+it)$ are given by
\begin{align}  \label{7-46pm}
	\left|\dfrac{1}{\zeta(1+it)}\right|\leq 42.9\log t, \,\,\,\,\,\,\,\, \mbox{and}\,\,\,\,\,\,\,\,\,\,	\left|\dfrac{\zeta'(1+it)}{\zeta(1+it)}\right|\leq 40.14\log t,
\end{align}
for $t\geq 133$. The first bound in \eqref{7-46pm} was established by Carneiro, Chirre, Helfgott, and Mejía-Cordero in \cite[Proposition A.2]{CCHM}, and the second bound was established by Trudgian in \cite{Trud}. There are improvements in the orders of magnitude of the mentioned estimates (see for instance \cite[p. 135]{Tit}), but it appears that those bounds are better when $t$ is astronomically large, and then they will not be useful for computational purposes.

On the other hand, assuming the Riemann hypothesis (RH), Littlewood proved in \cite{L2} that, as $t\to\infty$,
$$
\left|\dfrac{1}{\zeta(1+it)}\right|\leq\left(\dfrac{12e^\gamma}{\pi^2}+o(1)\right) \log\log t,
$$
where $\gamma$ is the Euler–Mascheroni constant. An explicit version of this result has been given by Lamzouri, Li, and
Soundararajan in \cite[p. 2394]{Sound}, establishing for $t\geq 10^{10}$ that
\begin{align*}  
	\left|\dfrac{1}{\zeta(1+it)}\right|\leq\dfrac{12e^\gamma}{\pi^2}\left(\log\log t - \log 2 + \dfrac{1}{2} + \dfrac{1}{\log \log t}+\dfrac{14\log\log t}{\log t}\right).
\end{align*}
Moreover, recently Chirre, Simoni\v{c}, and Val{\aa}s in \cite[Theorem 5]{ChirreSV}, under RH, proved for $t\geq10^{30}$ that
\begin{align*} 
	\left|\dfrac{\zeta'(1+it)}{\zeta(1+it)}\right|\leq 2\log\log t -0.4989 + \,5.35\,\dfrac{(\log\log t)^2}{\log t}. 
\end{align*}
Some generalizations of these estimates for families of $L$-functions can be found in \cite{Lumley,SimonicPalo}.

\subsection{Bounds for zeta under partial RH} In this paper, we are interested in obtaining bounds for the Riemann zeta-function, but only assuming a partial verification of RH. For $T>0$ we say that the Riemann hypothesis is true up to height $T$ (RH up to height $T$) if all non-trivial zeros $\rho=\beta+i\gamma$ of $\zeta(s)$ such that $|\gamma|\leq T$ satisfy $\beta=1/2$. The best current result is given by Platt and Trudgian in \cite{Platt}, who verified numerically in a rigorous way using interval arithmetic, that the Riemann hypothesis is true up to height $T=3\cdot 10^{12}$.  

\begin{theorem}\label{maintheorem}
	For a fixed $0<\delta<1$ define
	\begin{align}  \label{8_06pm}
		E_\delta(T)=\left(\dfrac{1}{\delta^2}+1\right) \dfrac{\log T}{2\pi T}.
	\end{align}
	Assume RH up to height $T\geq 10^9$. Then, for $10^6\leq t\leq  (1-\delta)T$ we have
\begin{align} \label{10_09pm}
\left|\dfrac{\zeta'(1+it)}{\zeta(1+it)}\right|\leq 2\log\log t+1.219 +\dfrac{16.108}{(\log\log t)^2}+1.057E_\delta(T).
\end{align}
	and
	\begin{align} \label{10_10pm}
		\left|\dfrac{1}{\zeta(1+it)}\right|\leq {2e^\gamma}\left(\log\log t +3.404 + \dfrac{9.378}{\log \log t}\right)\cdot\exp(0.793E_\delta(T)).
	\end{align} 
\end{theorem}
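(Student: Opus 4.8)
The plan is to follow the explicit-formula strategy behind the conditional bounds of Lamzouri--Li--Soundararajan and Chirre--Simoni\v{c}--Val{\aa}s, but to isolate precisely where the Riemann hypothesis is invoked and replace its global use by the verified information for $|\gamma|\le T$ together with an explicit accounting of the zeros above height $T$. The starting point is a Perron/Mellin representation of $\frac{\zeta'}{\zeta}(1+it)$ (and, after integrating in $\sigma$, of $\log\zeta(1+it)$) with a cutoff at a parameter $x$: shifting the contour produces a short Dirichlet sum over prime powers $n\le x$, the smooth archimedean (Gamma) contribution, and a sum over the non-trivial zeros $\rho=\beta+i\gamma$ with kernels decaying like $x^{\rho-s}/(\rho-s)^2$. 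I would take $x=(\log t)^2$, so that $x^{-1/2}=1/\log t$ and $\log x=2\log\log t$; this choice is what manufactures the main terms.

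The prime side is handled by explicit Mertens/Chebyshev estimates. For the logarithmic derivative I would bound the Dirichlet sum on the $1$-line trivially by $\sum_{n\le x}\frac{\Lambda(n)}{n}=\log x+O(1)=2\log\log t+O(1)$; for the reciprocal I would instead exponentiate a lower bound for $\log|\zeta(1+it)|$, whose prime contribution is $\sum_{n\le x}\frac{\Lambda(n)}{n\log n}=\log\log x+\gamma+o(1)$, so that its exponential is $e^{\gamma}\log x=2e^{\gamma}\log\log t$ by Mertens' third theorem. This accounts both for the constant $2$ and for the factor $2e^{\gamma}$, while the secondary terms (the $1.219$, the $16.108/(\log\log t)^2$, and the $3.404$, $9.378/\log\log t$) emerge from the explicit error terms in these Mertens-type estimates combined with the zero sum, all organized in powers of $1/\log x\asymp 1/\log\log t$.

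The heart of the argument is the zero sum, which I would split at height $T$. For $|\gamma|\le T$ the hypothesis RH up to height $T$ gives $\beta=\tfrac12$, hence $|x^{\rho-s}|=x^{-1/2}=1/\log t$; combined with $\sum_{\rho}\frac{1}{|\rho-s|^{2}}=O(\log t)$ (from an explicit Riemann--von Mangoldt formula for $N(t)$) this block contributes only a bounded amount that feeds into the lower-order constants. The zeros with $|\gamma|>T$, for which $\beta$ is unknown, are exactly where the restriction $t\le(1-\delta)T$ is used: it forces $|\gamma-t|\ge\delta T$, so that, using $|x^{\rho-s}|\le 1$,
\begin{equation*}
\sum_{|\gamma|>T}\frac{|x^{\rho-s}|}{|\rho-s|^{2}}\le\sum_{|\gamma|>T}\frac{1}{(\gamma-t)^{2}}\le\Big(\frac{1}{\delta^{2}}+1\Big)\frac{\log T}{2\pi T}=E_\delta(T),
\end{equation*}
after inserting the density of zeros and estimating $\int_T^\infty\frac{\log\gamma}{2\pi(\gamma-t)^2}\,\d\gamma$ (the $1/\delta^2$ coming from $\gamma-t\ge\delta\gamma$ for the positive zeros, the extra $+1$ from the zeros with $\gamma<-T$). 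Since this correction enters additively at the level of $\frac{\zeta'}{\zeta}$ and of $\log|\zeta|$, it appears linearly (coefficient $1.057$) in \eqref{10_09pm} and, after exponentiation, as the factor $\exp(0.793\,E_\delta(T))$ in \eqref{10_10pm}.

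The main obstacle I anticipate is not the conceptual splitting but the fully explicit bookkeeping required to reach the stated numerical constants. Concretely, one must (i) choose the cutoff and smoothing so that the prime sums produce exactly $2\log\log t$ and $2e^{\gamma}\log\log t$ while keeping the explicit-formula error manageable for the small window $x=(\log t)^2$; (ii) bound the archimedean terms and the verified-zero block $|\gamma|\le T$ with completely explicit constants, since these generate the delicate secondary terms such as $16.108/(\log\log t)^2$; and (iii) track every error uniformly for $t\ge 10^6$ and $T\ge 10^9$, checking that these ranges are large enough for the asymptotic inputs (Stirling, Mertens, $N(t)$) to hold with the claimed constants. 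Balancing the size of $x$ against these errors, and confirming that the contribution of the unverified zeros is genuinely captured by $E_\delta(T)$ rather than a larger quantity, is where the real work concentrates.
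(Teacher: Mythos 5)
Your overall architecture does match the paper's: an explicit formula expressing $\zeta'/\zeta(s)$ through a prime sum of length $\log^2 t$ and a sum over zeros; the zero sum split at height $T$; the unverified tail reduced via $\gamma-t\ge\delta\gamma$ to $(1/\delta^2+1)\sum_{\gamma>T}\gamma^{-2}$, which is exactly Lemma \ref{5_05pm} (to make ``inserting the density of zeros'' explicit you would still need an input such as the Brent--Platt--Trudgian estimate the paper quotes); explicit Mertens-type bounds for the prime sums; and the integrate-then-exponentiate passage from $\zeta'/\zeta$ to $\log\zeta$ and $1/\zeta$. One structural caveat: a kernel decaying like $x^{\rho-s}/(\rho-s)^2$ cannot come from a single sharp cutoff; a one-parameter smoothing produces an unwanted term $(\zeta'/\zeta)'(s)/\log x$ from the double pole, which at $s=1+it$ is of the same order as the main term. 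This is precisely why the paper uses the two-parameter formula of Montgomery--Vaughan, with kernel $\bigl((xy)^{\rho-s}-x^{\rho-s}\bigr)/\bigl((\rho-s)^2\log y\bigr)$.

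The genuine gap is your treatment of the verified block $|\gamma|\le T$, and it is structural rather than bookkeeping. You bound this block by $x^{-1/2}\sum_{|\gamma|\le T}|\rho-s|^{-2}$ with $\sum_{|\gamma|\le T}|\rho-s|^{-2}=O(\log t)$ from explicit zero counting, and absorb the result into the constants. But any bound $\sum_{\rho}\bigl(\tfrac14+(t-\gamma)^2\bigr)^{-1}\le C_1\log t+C_2$ proved that way has $C_1$ well above $1$: the density term alone gives $\sim\log t$, and the oscillating error in explicit $N(t)$ formulas (of size roughly $0.1\log t$) is amplified by the total variation of the kernel, which is $8$, so realistically $C_1\approx 1.8$; comparing instead with the kernel at $\alpha=3/2$, where $\zeta'/\zeta$ is bounded by a constant, gives $C_1=2$. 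Since the zero-kernel coefficient at $\alpha=1$ is $\A_0/(2\log t)$ with the paper's parameters, your verified block contributes about $\A_0C_1/2\ge 3.2$ to the additive constant, whereas the theorem asserts a constant tending to $1.219\approx\A_0/2-\gamma$; the excess cannot be hidden in $16.108/(\log\log t)^2$ or in $1.057\,E_\delta(T)$, both of which tend to $0$ as $t$ and $T$ grow. The paper's key device, absent from your sketch, is Lemma \ref{1_28pm}: the verified block is bounded by $\re\frac{\zeta'}{\zeta}(\alpha+it)+\frac{\log t}{2}$, i.e.\ in terms of the unknown quantity itself, so that $|\zeta'/\zeta(s)|$ appears on both sides of the resulting inequality with right-hand coefficient $\A_0(\log t)^{1-2\alpha}<1$, and one solves for it at the cost of the factor $1+\epsilon(\alpha,t)$ in \eqref{12_45am}. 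This yields the coefficient $\tfrac12$ of $\log t$ (Stirling alone, no zero-counting loss), and the stated constants encode it: $1.057=(1+\epsilon(1,10^6))/\lambda_0$ and $0.793=3(1+\epsilon(1,10^6))/(4\lambda_0)$. Without this self-referential step your argument proves an estimate of the same shape, but with constants strictly weaker than those claimed.
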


From Theorem \ref{maintheorem} we can also derive explicit versions of \eqref{11_24pm} in a finite but large range where computational verification is difficult. In fact, by Platt and Trudgian's result, we can take $T=3\cdot 10^{12}$, and letting $\delta=10^{-5}$ it follows, unconditionally, that for $10^6\leq t\leq 2.99997\cdot 10^{12}$ we have
$$
\left|\dfrac{\zeta'(1+it)}{\zeta(1+it)}\right|\leq 0.639\cdot \log t,\,\,\,\,\,\,\,\,\,\mbox{and} \,\,\,\,\,\,\,\,\,\,\,\,\dfrac{1}{|\zeta(1+it)|}\leq 2.506\cdot\log t.
$$  
This improves the results in \eqref{7-46pm} in the range $10^6\leq t\leq 2.99997\cdot 10^{12}$. 

\vspace{0.2cm}

We mention that \eqref{10_10pm} is derived from an upper bound for $|\log\zeta(1+it)|$ (see \eqref{8_24pm}), which also allows us to deduce that
\begin{align} \label{4_39pm}
	|\zeta(1+it)|\leq {2e^\gamma}\left(\log\log t +3.404 + \dfrac{9.378}{\log \log t}\right)\cdot\exp(0.793E_\delta(T)).
\end{align} 
However, currently the best unconditional explicit bound for $\zeta(1+it)$ is given by Patel, who proved in \cite[Theorem 1.1]{Patel} that for $t\geq 3$
\begin{align} \label{7_45pm}
	|\zeta(1+it)|\leq \min\left\{\log t, \dfrac{\log t}{2}+1.93, \dfrac{\log t}{5}+44.02\right\}\!.
\end{align} 
So \eqref{4_39pm} improves \eqref{7_45pm}, if we have the verification of RH up to height $T$ sufficiently large.

\vspace{0.5cm}

The proof of Theorem \ref{maintheorem} is carried out in Section \ref{12_19pm}, and it partially follows the conditional proofs of \cite[Section 13.2]{MV}. Here, an explicit formula is used, which is a formula that relates the zeros of $\zeta(s)$ and the prime numbers. This formula is unconditional and contains a certain sum involving the non-trivial zeros. Assuming RH, this sum is bounded without much effort. In our case, the novelty here is the way to bound the contribution of the non-trivial zeros, since we only assume RH up to height $T$. We will split this sum into two parts, the zeros with ordinates $|\gamma|\leq T$ and $|\gamma|>T$, and we will analyze them separately. These sums are studied in Section \ref{7_02pm}. We highlight that the proof of Theorem \ref{maintheorem} is short, and the constants involved can be improved slightly. 

Throughout the paper we will use the notation $\alpha=O^*(\beta)$, which means that $|\alpha|\leq \beta$.

\smallskip

\section*{Acknowledgements}
I am grateful to Harald Helfgott for encouraging me in this project and for helpful discussions related to the material in this paper.

\section{The sum over the non-trivial zeros} \label{7_02pm}

To bound the sum related to the non-trivial zeros of $\zeta(s)$ with ordinates $|\gamma|\leq T$, we will use the following lemma. 

\begin{lemma} \label{1_28pm} Assume RH up to height $T>0$. Then, for $t\geq 10^6$ and $1\leq\alpha\leq 3/2$ we have
\begin{align*} 			\sum_{|\gamma|\leq T}\dfrac{\alpha-\hh}{(\alpha-\hh)^2+(t-\gamma)^2} & \leq \re\dfrac{\zeta'(\alpha+it)}{\zeta(\alpha+it)}+\dfrac{\log t}{2}.
\end{align*}
\end{lemma}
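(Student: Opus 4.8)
The plan is to start from the Hadamard product formula for the logarithmic derivative of $\zeta$, exactly as in \cite[Section 13.2]{MV}. Writing $s=\alpha+it$, one has the partial fraction expansion
\[
\frac{\zeta'(s)}{\zeta(s)}=B-\frac{1}{s-1}+\frac12\log\pi-\frac12\frac{\Gamma'}{\Gamma}\Bigl(\frac s2+1\Bigr)+\sum_{\rho}\Bigl(\frac{1}{s-\rho}+\frac1\rho\Bigr),
\]
where $\rho=\beta+i\gamma$ runs over the non-trivial zeros and $B$ is the usual constant satisfying $\re B=-\sum_\rho\re(1/\rho)$. I would take real parts; the key simplification is that $\re B$ cancels exactly against $\sum_\rho\re(1/\rho)$, leaving
\[
\re\frac{\zeta'(s)}{\zeta(s)}=\sum_{\rho}\re\frac{1}{s-\rho}-\re\frac{1}{s-1}+\frac12\log\pi-\frac12\re\frac{\Gamma'}{\Gamma}\Bigl(\frac s2+1\Bigr).
\]

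Next I would use positivity. For every non-trivial zero, $\re\frac{1}{s-\rho}=\frac{\alpha-\beta}{(\alpha-\beta)^2+(t-\gamma)^2}\ge 0$, since $\alpha\ge 1\ge\beta$; hence dropping the zeros with $|\gamma|>T$ only decreases the sum, and
\[
\sum_{|\gamma|\le T}\re\frac{1}{s-\rho}\le\re\frac{\zeta'(s)}{\zeta(s)}+\re\frac{1}{s-1}-\frac12\log\pi+\frac12\re\frac{\Gamma'}{\Gamma}\Bigl(\frac s2+1\Bigr).
\]
The hypothesis RH up to height $T$ enters precisely here: for $|\gamma|\le T$ one has $\beta=\tfrac12$, so each retained term equals $\dfrac{\alpha-\frac12}{(\alpha-\frac12)^2+(t-\gamma)^2}$, which is exactly the summand in the statement.

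It then remains to bound the archimedean remainder $R:=\re\frac{1}{s-1}-\frac12\log\pi+\frac12\re\frac{\Gamma'}{\Gamma}(\frac s2+1)$ by $\frac12\log t$. The term $\re\frac{1}{s-1}=\frac{\alpha-1}{(\alpha-1)^2+t^2}$ is of size $O(1/t^2)$ and negligible, so the only genuinely technical step -- and the main obstacle -- is an explicit bound on the digamma term. Using a Stirling estimate of the form $\re\frac{\Gamma'}{\Gamma}(z)=\log|z|+O(1/|z|)$ with $z=\frac s2+1$, together with $|z|^2\le \frac{t^2}{4}+(\tfrac74)^2$ for $1\le\alpha\le\tfrac32$, I would get $\frac12\re\frac{\Gamma'}{\Gamma}(\frac s2+1)\le\frac12\log t-\frac12\log 2+o(1)$. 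Combining the pieces gives $R\le\frac12\log t-\frac12\log(2\pi)+o(1)$, and since $\frac12\log(2\pi)\approx 0.919$ while the error is tiny once $t\ge 10^6$, the remainder stays comfortably below $\frac12\log t$. This slack of roughly $0.9$ shows the estimate is far from sharp and that the threshold $t\ge10^6$ is not restrictive; the care needed is only in making the digamma bound fully explicit.
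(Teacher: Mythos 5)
Your proof is correct and takes essentially the same route as the paper: the partial-fraction expansion you derive from the Hadamard product is exactly the fractional decomposition \cite[Corollary 10.14]{MV} that the paper cites, the positivity argument dropping the zeros with $|\gamma|>T$ (valid since $\alpha\ge 1>\beta$) is the paper's key step, and your digamma estimate is the same computation the paper performs using the explicit bound $\frac{\Gamma'}{\Gamma}(z)=\log z-\frac{1}{2z}+O^*\bigl(\frac{1}{4|z|^2}\bigr)$ for $\re z\ge 0$. The only differences are cosmetic: you rederive the cited decomposition from scratch, and you leave an $o(1)$ in the Stirling step where the paper's reference makes it fully explicit, which is needed for a finished explicit result but poses no obstacle given the slack of about $\frac12\log(2\pi)$ you correctly identify.
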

\begin{proof}
Letting $s=\alpha+it$, and using the fractional decomposition of $\zeta(s)$ (see \cite[Corollary 10.14]{MV}) we get
\begin{align*} 
	\sum_{\gamma}\dfrac{\alpha-\re{\rho}}{(\alpha-\re{\rho})^2+(t-\gamma)^2} & = \re\dfrac{\zeta'(s)}{\zeta(s)}+\dfrac{1}{2}\,\re\dfrac{\Gamma'}{\Gamma}\bigg(\dfrac{s}{2}+1\bigg)-\dfrac{\log \pi}{2} + \dfrac{\alpha-1}{(\alpha-1)^2+t^2}.
\end{align*}
From the bound
\begin{align*} 
	\frac{\Gamma'}{\Gamma}(z)= \log z -\frac{1}{2z}+ O^*\left(\dfrac{1}{4|z|^2}\right)\!, \,\,\,\,\,\mbox{for}\,\,\,\, \re{z}\geq 0
\end{align*} (see \cite[Lemma 3.11, p. 67]{HaraldG}), it follows that
\begin{align}\label{12_30pm}
	\sum_{\gamma}\dfrac{\alpha-\re{\rho}}{(\alpha-\re{\rho})^2+(t-\gamma)^2}  \leq \re\dfrac{\zeta'(s)}{\zeta(s)}+\dfrac{\log t}{2}.
\end{align}
On the other hand, one can split the sum over the zeros, and using the fact that $\re{\rho}<1$ we get
\begin{align} \label{12_29pm}
	\begin{split} 
		\sum_{\gamma}\dfrac{\alpha-\re{\rho}}{(\alpha-\re{\rho})^2+(t-\gamma)^2} & = 	\sum_{|\gamma|\leq T}\dfrac{\alpha-\hh}{(\alpha-\frac{1}{2})^2+(t-\gamma)^2}+	\sum_{|\gamma|>T}\dfrac{\alpha-\re{\rho}}{(\alpha-\re{\rho})^2+(t-\gamma)^2}\\
		& \geq \sum_{|\gamma|\leq T}\dfrac{\alpha-\hh}{(\alpha-\frac{1}{2})^2+(t-\gamma)^2}.
	\end{split}
\end{align}
Combining \eqref{12_30pm} and \eqref{12_29pm} we arrive at the desired result.	
\end{proof}

To bound the sum related to the non-trivial zeros of $\zeta(s)$ with ordinates $|\gamma|>T$, we will use the auxiliary function 
\begin{align*}
	E(t,T):=\sum_{|\gamma|>T}\dfrac{1}{(\gamma-t)^2},
\end{align*} 
where $t$ does not coincide with an ordinate of a zero of $\zeta(s)$. This function measures, in a certain sense, the difference between the bounds under RH up to height $T$ and the bounds under RH. In fact, for a fixed $t\geq 0$ we see that
$$
\lim_{T\to\infty} E(t,T)=0.
$$
To estimate $E(t,T)$, the parameter $t$ must not be close to the ordinates of the zeros, and we need to take $T$ sufficiently large to reduce the contribution. Here, we will bound this term using a sum studied by Brent, Platt, and Trudgian in \cite{BPT}.

\begin{lemma}\label{5_05pm} Fix $0<\delta<1$ and $T\geq 10^9$. Then, for $0\leq t\leq (1-\delta)T$ we have
$$
	0<E(t,T)\leq E_\delta(T),
$$
where $E_\delta(T)$ was defined in \eqref{8_06pm}.
\end{lemma}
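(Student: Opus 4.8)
The plan is to reduce the estimate of $E(t,T)$ to the single tail sum $\sum_{\gamma>T}1/\gamma^2$ by elementary manipulations, and then to invoke the explicit bound on this sum due to Brent, Platt, and Trudgian. Positivity is immediate, since $E(t,T)$ is a sum of strictly positive terms and is nonempty (there are infinitely many zeros with $|\gamma|>T$); convergence is guaranteed by the logarithmic growth of the counting function.

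For the upper bound, first I would split the sum according to the sign of the ordinate,
\[
E(t,T)=\sum_{\gamma>T}\frac{1}{(\gamma-t)^2}+\sum_{\gamma<-T}\frac{1}{(\gamma-t)^2},
\]
and use the symmetry $\gamma\mapsto-\gamma$ of the nontrivial zeros to reindex the second sum over positive ordinates. For the positive ordinates $\gamma>T$, the hypothesis $t\leq(1-\delta)T<(1-\delta)\gamma$ gives $\gamma-t>\delta\gamma$, hence $(\gamma-t)^2>\delta^2\gamma^2$. For the reindexed negative ordinates, since $t\geq 0$ we have $(\gamma+t)^2\geq\gamma^2$. Combining these two pointwise bounds yields
\[
E(t,T)\leq\frac{1}{\delta^2}\sum_{\gamma>T}\frac{1}{\gamma^2}+\sum_{\gamma>T}\frac{1}{\gamma^2}=\left(\frac{1}{\delta^2}+1\right)\sum_{\gamma>T}\frac{1}{\gamma^2}.
\]

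It then remains only to show that $\sum_{\gamma>T}1/\gamma^2\leq\frac{\log T}{2\pi T}$ for $T\geq10^9$, and this is where the real content lies. I would obtain it directly from the explicit estimates of \cite{BPT}. Heuristically the bound is sharp: partial summation against the Riemann--von Mangoldt formula $N(u)\sim\frac{u}{2\pi}\log\frac{u}{2\pi}$ gives $\sum_{\gamma>T}1/\gamma^2=-N(T)/T^2+\int_T^\infty 2N(u)u^{-3}\,\du\sim\frac{\log T}{2\pi T}$, so the constant $\frac{1}{2\pi}$ is forced. Turning this asymptotic into a genuine upper bound valid for \emph{all} $T\geq10^9$ is the main obstacle, and it is exactly the point at which the explicit work of \cite{BPT} is needed; once that bound is in hand, substituting it into the last display gives $E(t,T)\leq\left(\frac{1}{\delta^2}+1\right)\frac{\log T}{2\pi T}=E_\delta(T)$, completing the proof.
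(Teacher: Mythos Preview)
Your proposal is correct and follows essentially the same route as the paper: split $E(t,T)$ into the contributions from $\gamma>T$ and $\gamma<-T$, use the symmetry of the zeros together with the constraint $0\le t\le(1-\delta)T$ to obtain $E(t,T)\le(\delta^{-2}+1)\sum_{\gamma>T}\gamma^{-2}$, and then appeal to the explicit tail estimate of \cite{BPT} (combined with $T\ge 10^9$) to bound $\sum_{\gamma>T}\gamma^{-2}$ by $\frac{\log T}{2\pi T}$. The paper is slightly more explicit at the final step, writing out the \cite{BPT} inequality and the resulting numerical bound before absorbing the lower-order terms using $T\ge 10^9$, but the argument is the same.
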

\begin{proof} Since $t\leq (1-\delta)T$, we find that
$$
E(t,T)=\sum_{|\gamma|>T}\dfrac{1}{(\gamma-t)^2}= \sum_{\gamma>T}\dfrac{1}{(\gamma-t)^2}+\sum_{\gamma>T}\dfrac{1}{(\gamma+t)^2}\leq\left(\dfrac{1}{\delta^2}+1\right)\sum_{\gamma>T}\dfrac{1}{\gamma^2}.
$$		
By \cite[Theorem 1 and Example 1]{BPT}, 
\begin{align*}
	\left| \sum_{\gamma\geq T}'\dfrac{1}{\gamma^2}-\dfrac{1}{2\pi}\int_{T}^\infty\dfrac{\log(t/2\pi)}{t^2}\dt\right|\leq \dfrac{0.14 + 0.56\log T}{T^2},
\end{align*}
where the prime symbol $'$ indicates that if $\gamma=T$, then it is counted with weight $1/2$. Thus
$$
\sum_{\gamma>T}\dfrac{1}{\gamma^2}\leq \dfrac{1}{2\pi T}\log\left(\dfrac{T}{2\pi}\right)+\dfrac{1}{2\pi T}+\dfrac{0.14 + 0.56\log T}{T^2}.
$$
Hence, using that $T\geq 10^9$ we conclude.
\end{proof}

\section{Proof of Theorem \ref{maintheorem}} \label{12_19pm}

\subsection{Bounding $\zeta'(s)/\zeta(s)$} Assume that RH is true up to height $T\geq 10^9$. Let $t\geq 10^6$ and $1\leq\alpha\leq 3/2$. Given $x, y \geq 2$ and $s=\alpha+it$, the unconditional formula \cite[Eq. 13.35]{MV} states that 
\begin{align} \label{identity}
	\dfrac{\zeta'(s)}{\zeta(s)}=-\sum_{\rho}\dfrac{(xy)^{\rho-s}-x^{\rho-s}}{(\rho-s)^2\log y}-\sum_{k=1}^\infty\dfrac{(xy)^{-2k-s}-x^{-2k-s}}{(2k+s)^2\log y}+\dfrac{(xy)^{1-s}-x^{1-s}}{(1-s)^2\log y}-\sum_{n\leq xy}\dfrac{\Lambda(n)}{n^s}w(n),
\end{align} 
where $w(n)$ is a function defined in  \cite[p. 433]{MV} satisfying that $|w(n)|\leq 1$. 
Let us bound each term on the right-hand side of \eqref{identity}. Since  $|(xy)^{\rho-s}-x^{\rho-s}|\leq x^{\re{\rho}-\alpha}(y^{\re{\rho}-\alpha}+1)$ and $\re{\rho}<1$ we have 
\begin{align*}
		\left|\sum_{\rho}\dfrac{(xy)^{\rho-s}-x^{\rho-s}}{(\rho-s)^2\log y}\right| &  =\left|\sum_{|\gamma|\leq T}\dfrac{(xy)^{\frac{1}{2}+i\gamma-s}-x^{\frac{1}{2}+i\gamma-s}}{(\frac{1}{2}+i\gamma-s)^2\log y}  +  \sum_{|\gamma|>T}\dfrac{(xy)^{\rho-s}-x^{\rho-s}}{(\rho-s)^2\log y}\right| \\& \leq \dfrac{x^{\frac{1}{2}-\alpha}(y^{\frac{1}{2}-\alpha}+1)}{\log y}\sum_{|\gamma|\leq T}\dfrac{1}{(\alpha-\frac{1}{2})^2+(t-\gamma)^2}  + \dfrac{x^{1-\alpha}(y^{1-\alpha}+1)}{\log y}\,E(t,T).
\end{align*}
Assuming that $10^6\leq t\leq (1-\delta)T$, Lemmas \ref{1_28pm} and \ref{5_05pm} lead us to 
\begin{align*}
	\left|\sum_{\rho}\dfrac{(xy)^{\rho-s}-x^{\rho-s}}{(\rho-s)^2\log y}\right|
	&\leq
	\dfrac{x^{\frac{1}{2}-\alpha}(y^{\frac{1}{2}-\alpha}+1)}{(\alpha-\frac{1}{2})\log y}\cdot\re\dfrac{\zeta'(s)}{\zeta(s)}+\dfrac{x^{\frac{1}{2}-\alpha}(y^{\frac{1}{2}-\alpha}+1)\log t}{2(\alpha-\frac{1}{2})\log y}+ \dfrac{x^{1-\alpha}(y^{1-\alpha}+1)}{\log y}\,E_\delta(T).
\end{align*}
We estimate the next terms in \eqref{identity} trivially as follows \begin{align*}
	\Bigg|\sum_{k=1}^\infty\dfrac{(xy)^{-2k-s}-x^{-2k-s}}{(2k+s)^2\log y} \Bigg|\leq \dfrac{0.3}{t^2}, \,\,\,\,\,\,\,\,\,\,\,\,\,\,\,\,\,\,\,\,\,
	\Bigg|\dfrac{(xy)^{1-s}-x^{1-s}}{(1-s)^2\log y}\Bigg| \leq \dfrac{2.9}{t^2},
\end{align*} 
and 
\begin{align*} 
	\left|\sum_{n\leq xy}\dfrac{\Lambda(n)}{n^s}w(n)\right|\leq \sum_{n\leq xy}\dfrac{\Lambda(n)}{n^\alpha}.
\end{align*}
Inserting these bounds in \eqref{identity} we arrive at
\begin{align*} 
	\left|\dfrac{\zeta'(s)}{\zeta(s)}\right|\leq & 
	\dfrac{x^{\frac{1}{2}-\alpha}(y^{\frac{1}{2}-\alpha}+1)}{(\alpha-\frac{1}{2})\log y}{\bigg|\dfrac{\zeta'(s)}{\zeta(s)}\bigg|}
	+\dfrac{x^{\frac{1}{2}-\alpha}(y^{\frac{1}{2}-\alpha}+1)\log t}{2(\alpha-\frac{1}{2})\log y} +\sum_{n\leq xy}\dfrac{\Lambda(n)}{n^\alpha}+\dfrac{x^{1-\alpha}(y^{1-\alpha}+1)}{\log y}\,E_\delta(T)+\dfrac{3.2}{t^2}.
\end{align*}

\smallskip

Now, let $\lambda_0=1.2784\ldots$ be the point where the function $\lambda\mapsto (1+e^\lambda)/\lambda$ 
reaches its minimum value $\A_0=3.5911\ldots$ in $(0,\infty)$. Take
\begin{align*} 
	y = \exp\bigg(\dfrac{\lambda_0}{\alpha-\frac{1}{2}}\bigg)\geq 2 \,\,\,\,\, \mbox{and} \,\,\,\,\, x= \dfrac{\log^2t}{y}\geq 2.
\end{align*}
Note that \begin{align*} 
		\dfrac{x^{\frac{1}{2}-\alpha}(y^{\frac{1}{2}-\alpha}+1)}{(\alpha-\frac{1}{2})\log y}=
	\A_0(\log t)^{1-2\alpha}<1,
\end{align*}	
and 
$$
\dfrac{x^{1-\alpha}(y^{1-\alpha}+1)}{\log y}\leq \dfrac{2}{\log y}=\dfrac{2\alpha-1}{\lambda_0}.
$$
Therefore, ordering conveniently we derive that
\begin{align} \label{12_45am}
	\begin{split}  
		\bigg|\dfrac{\zeta'(\alpha+it)}{\zeta(\alpha+it)}\bigg|&\leq\left(1+ \epsilon(\alpha,t)\right)\Bigg[\dfrac{\A_0}{2}{(\log t)^{2-2\alpha}} +\sum_{n\leq \log^2t}\dfrac{\Lambda(n)}{n^\alpha}+ \dfrac{(2\alpha-1)}{\lambda_0}E_\delta(T)+\dfrac{3.2}{t^2}\Bigg],
	\end{split}
\end{align}
where $\epsilon(\alpha,t)$ is defined as
$$
\epsilon(\alpha,t):=\dfrac{1}{\A_0^{-1}(\log t)^{2\alpha-1}-1}.
$$

\subsection{Bounding $\zeta'(1+it)/\zeta(1+it)$} Letting $\alpha=1$ in \eqref{12_45am}, it follows that
\begin{align*}  
		\bigg|\dfrac{\zeta'(1+it)}{\zeta(1+it)}\bigg|&\leq\left(1+ \epsilon(1,t)\right)\Bigg[\dfrac{\A_0}{2} +\sum_{n\leq \log^2t}\dfrac{\Lambda(n)}{n}+ \dfrac{E_\delta(T)}{\lambda_0}+\dfrac{3.2}{t^2}\Bigg].
\end{align*}
To bound the sum over the primes in the above expression, we use the estimate (see \cite[Lemma 10]{R})
\begin{align*}  
	\sum_{n\leq X}\dfrac{\Lambda(n)}{n}\leq \log X - \gamma +  \dfrac{1.3}{\log^2X}, \,\,\,\,\, \mbox{for} \,\,\,\,X>1.
\end{align*} 
Finally, ordering conveniently and using that $t\geq 10^6$ we arrive at \eqref{10_09pm}.

\subsection{Bounding $\log\zeta(1+it)$} By the fundamental calculus theorem, one has 
\begin{align*} 
	\log\zeta(1+it) = \log\zeta(\tfrac{3}{2}+it) -\int_{1}^{\frac{3}{2}}\dfrac{\zeta'(\alpha+it)}{\zeta(\alpha+it)}\,\d\alpha.
\end{align*} 
Using that $|\log\zeta(\tfrac{3}{2}+it)|\leq \log\zeta(\tfrac{3}{2})$ we obtain that
\begin{align*}   
	|\log\zeta(1+it)|\leq  \int_{1}^{\frac{3}{2}}\bigg|\dfrac{\zeta'(\alpha+it)}{\zeta(\alpha+it)}\bigg|\,\d\alpha +\log\zeta(\tfrac{3}{2}).
\end{align*}
To bound the right-hand side of the above expression we use the inequality $\epsilon(\alpha,t)\leq \epsilon(1,t)$ in $1\leq\alpha\leq 3/2$, and integrating \eqref{12_45am} from $1$ to $3/2$ we obtain that
\begin{align*} 
		|\log\zeta(1+it)|  \leq &\left(1+ \epsilon(1,t)\right)\Bigg[\dfrac{\A_0}{4\log\log t}+\sum_{n\leq \log^2t}\dfrac{\Lambda(n)}{n\log n}-\sum_{n\leq  \log^2t}\frac{\Lambda(n)}{n^{\frac{3}{2}}\log n} +\dfrac{3E_\delta(T)}{4\lambda_0}\Bigg]+\log\zeta(\tfrac{3}{2}),
\end{align*}
where we have used that $-\mathcal{A}_0/(4\log t\log\log t)+1.6/t^2<0$. Furthermore, using that
$$
\log\zeta(\tfrac{3}{2})= \sum_{n\geq 1}\dfrac{\Lambda(n)}{n^{\frac{3}{2}}\log n}=0.960\ldots, 
$$ 
we have for $t\geq 10^6$ that
$$
\left(1+ \epsilon(1,t)\right)\sum_{n> \log^2t}\frac{\Lambda(n)}{n^{\frac{3}{2}}\log n}\leq  \epsilon(1,t)\sum_{n\geq 1}\frac{\Lambda(n)}{n^{\frac{3}{2}}\log n}.
$$
This implies that
\begin{align} \label{10_29pm} 
	|\log\zeta(1+it)|  \leq &\left(1+ \epsilon(1,t)\right)\Bigg[\dfrac{\A_0}{4\log\log t}+\sum_{n\leq \log^2t}\dfrac{\Lambda(n)}{n\log n} +\dfrac{3E_\delta(T)}{4\lambda_0}\Bigg].
\end{align}
To bound the sum over the primes, we use
\cite[Eq. (3.30)]{Ro} to see that for $x>1$
\begin{align*}  
	\sum_{n\leq x}\dfrac{\Lambda(n)}{n\log n}\leq \sum_{p\leq x}\sum_{k= 1}^{\infty}\dfrac{1}{kp^k}=\log\prod_{p\leq x}\bigg(1-\dfrac{1}{p}\bigg)^{-1}\leq \log\log x+\gamma +  \dfrac{1}{\log^2x}.
\end{align*} 
Inserting this in \eqref{10_29pm} we arrive at
\begin{align*} 
	|\log\zeta(1+it)|  \leq &\left(1+ \epsilon(1,t)\right)\Bigg[\log\log\log t + \log(2e^\gamma)+\dfrac{\A_0}{4\log\log t}+\dfrac{1}{4(\log\log t)^2} +\dfrac{3E_\delta(T)}{4\lambda_0}\Bigg].
\end{align*}
Thus
\begin{align} \label{8_24pm}
|\log\zeta(1+it)|\leq \log\log\log t + \log(2e^\gamma)+\dfrac{3.404}{\log\log t}+0.793E_\delta(T).
\end{align}
Taking exponential in the above expression and using the inequality $e^x\leq 1+x+0.8093x^2$ for $0\leq x\leq 1.297$, we obtain that
\begin{align*} 
	\exp(|\log\zeta(1+it)|)\leq {2e^\gamma}\left(\log\log t +3.404 + \dfrac{9.378}{\log \log t}\right)\cdot\exp(0.793E_\delta(T)).
\end{align*} 
Since $\log|\zeta(1+it)|^{-1}\leq |\log\zeta(1+it)|$ and $\log|\zeta(1+it)|\leq |\log\zeta(1+it)|$, we deduce \eqref{10_10pm} and \eqref{4_39pm} respectively.


\medskip

\end{document}